\newtheorem{theorem}{Theorem}[section]
\newtheorem{proposition}[theorem]{Proposition}
\newtheorem{corollary}[theorem]{Corollary}
\theoremstyle{remark}
\newtheorem*{claim*}{Claim}
\newcommand{\FM}{\ensuremath{\mathcal{F}}}
\newcommand{\CM}{\ensuremath{\mathcal{C}}}
\newcommand{\TM}{\ensuremath{\mathcal{T}}}
\newcommand{\SM}{\ensuremath{\mathcal{S}}}
\newcommand{\R}{\ensuremath{\mathbb{R}}}
\newcommand{\W}{\ensuremath{\mathbb{W}}}
\newcommand{\M}{\ensuremath{\mathbb{L}}}
\newcommand{\dS}{\ensuremath{\mathbb{S}}}
\newcommand{\g}[1]{\ensuremath{\mathfrak{#1}}}
\DeclareMathOperator{\Ad}{Ad}
\DeclareMathOperator{\Exp}{Exp}
\begin{document}
\title{Cohomogeneity one actions on Minkowski spaces}

\author[J.\ Berndt]{J\"{u}rgen Berndt}
\author[J.\ C.\ D\'{\i}az-Ramos]{Jos\'{e} Carlos D\'{\i}az-Ramos}
\author[M.\ Vanaei]{Mohammad Javad Vanaei}

\address{Department of Mathematics, King's College London, United Kingdom}
\email{j.berndt@kcl.ac.uk}

\address{Department of Geometry and Topology, University of Santiago de Compostela, Spain}
\email{josecarlos.diaz@usc.es}

\address{Department of Pure Mathematics, Tarbiat Modares University, Iran}
\email{javad.vanaei@modares.ac.ir}

\thanks{The second author has been supported 
by projects MTM2009-07756 and INCITE 09 207151PR (Spain).}

\begin{abstract}
We study isometric cohomogeneity one actions on the $(n+1)$-dimensional Minkowski space $\M^{n+1}$ up to orbit-equivalence. We give examples of isometric cohomogeneity one actions on $\M^{n+1}$ whose orbit spaces are non-Hausdorff. We show that there exist isometric cohomogeneity one actions on $\M^{n+1}$, $n \geq 3$, which are orbit-equivalent on the complement of an $n$-dimensional degenerate subspace $\W^n$ of $\M^{n+1}$ and not orbit-equivalent on $\W^n$. We classify isometric cohomogeneity one actions on $\M^2$ and $\M^3$ up to orbit-equivalence.
\end{abstract}

\date{\today}

\subjclass[2010]{53C50}

\keywords{Cohomogeneity one actions, Minkowski space, parabolic subgroups}

\maketitle

\section{Introduction}
\label{sect:Intro}

Cohomogeneity one actions are known to be useful for the construction of geometric structures on manifolds, e.g.\ metrics with special holonomies, or for calculating explicit solutions of certain systems of partial differential equations, e.g.\ the Einstein equations. In Riemannian geometry, the orbit structure of a cohomogeneity one action is easy to describe.
Let $M$ be a connected complete Riemannian manifold and let $H$ be a connected subgroup of the isometry group of $M$. Assume that the action is proper and of cohomogeneity one, that is, the codimension of a principal orbit of the action is one. It is well-known (see e.g.\ \cite{BB82} or \cite{M57}) that the orbit space of such an action is homeomorphic to the real line ${\mathbb R}$, to the circle $S^1$, to the closed unbounded interval $[0,\infty)$, or to the closed bounded interval $[0,1]$. If the orbit space is homeomorphic to ${\mathbb R}$ or to $S^1$, then the orbits form a Riemannian foliation. If the orbit space is homeomorphic to $[0,\infty)$, then there exists exactly one singular orbit and the principal orbits are the tubes around this singular orbit. If the orbit space is homeomorphic to $[0,1]$, then there exist exactly two singular orbits and each principal orbit is a tube around each of the two singular orbits. A particular consequence of this is that all orbits of a cohomogeneity one action can be constructed from one orbit of the action, and it does not matter whether the orbit is principal or singular. The fact that the orbit space is one-dimensional can sometimes be used for reformulating systems of partial differential equations in terms of ordinary differential equations. The classification of cohomogeneity one actions on certain manifolds has also attracted much attention. For cohomogeneity one actions on Riemannian symmetric spaces see for example \cite{BT13} (for the noncompact case) and \cite{Ko02} (for the compact case).

The motivation for this paper is to get a better understanding of cohomogeneity one actions in Lorentzian geometry. We mention that, in contrast to cohomogeneity one actions, transitive isometric actions in Lorentzian geometry have been studied quite thoroughly, see for example the papers \cite{AS97} and \cite{AS01} by Adams and Stuck. In this paper we investigate cohomogeneity one actions on the $(n+1)$-dimensional Minkowski space $\M^{n+1}$. Ahmadi and Kashani investigated in \cite{AK11} such actions under the assumption that the action is proper. This situation is similar to the Riemannian case and  the orbit space is homeomorphic to $\R$ or to $[0,\infty)$. We will not assume here that the action is proper.

One interesting class of cohomogeneity one actions on $\M^{n+1}$ is given by certain subgroups of a maximal parabolic subgroup $Q$ of the restricted Lorentz group $SO^o_{n,1}$. The restricted Lorentz group acts transitively on the real hyperbolic space $H^n$, considered as a space-like hypersurface in $\M^{n+1}$ in the usual way, so that we can write $H^n = SO^o_{n,1}/SO_n$ as a homogeneous space. Consider an Iwasawa decomposition $SO^o_{n,1} = SO_n A N$. Then the solvable Lie group $AN$ acts transitively on $H^n$. The maximal parabolic subgroup $Q$ is, up to conjugacy,  of the form $Q = K_0AN$ with $K_0 \cong SO_{n-1} \subset SO_n$. The parabolic subgroup $Q = K_0AN$ acts with cohomogeneity one on $\M^{n+1}$. Our first main result, for $n \geq 3$, is Theorem \ref{nparabolic}. This results states that every subgroup $H = K'AN \subset K_0AN$ acts on $\M^{n+1}$ with cohomogeneity one. We investigate thoroughly the orbit structure of these actions. A remarkable feature of these actions is that there exists an $n$-dimensional degenerate subspace $\W^n$ of $\M^{n+1}$ such that on $\M^{n+1} \setminus \W^n$ all these actions have the same orbits, whereas the orbit structures become different on the $n$-dimensional subspace $\W^n$. As a consequence we see that even if the orbit structure of a cohomogeneity one action on $\M^{n+1}$ is known on a dense and open subset, one cannot reconstruct in general all orbits. Such a curious phenomenon cannot occur in Riemannian geometry.

Our second main result is Theorem \ref{th:L3}, which contains an explicit classification of all cohomogeneity one actions on $\M^3$ up to orbit-equivalence. We show that, up to orbit-equivalence, there is a one-parameter family of such actions, parametrized by $[0,\infty)$, plus nine further cohomogeneity one actions. We investigate the orbit structures and the geometry of the orbits of these actions in detail. It is worthwhile to compare Theorem \ref{th:L3} with its Euclidean counterpart (\cite{So18}): There are, up to orbit-equivalence, exactly three cohomogeneity one actions on the $3$-dimensional Euclidean space ${\mathbb E}^3$. The orbits are either parallel planes, concentric spheres or coaxial circular cylinders.

The paper is organized as follows. In Section \ref{sect:preliminaries} we present some basic material about the Minkowski space $\M^{n+1}$. In Section \ref{sect:isotropy action} we describe the orbit structure of the action of the restricted Lorentz group $SO^o_{n,1}$ on $\M^{n+1}$, or equivalently, of the isotropy representation of the homogeneous space $\M^{n+1} = (SO^o_{n,1} \ltimes \M^{n+1}) / SO^o_{n,1}$. In Section \ref{sect:parabolic action} we investigate the action of a maximal parabolic subgroup $Q$ of $SO^o_{n,1}$, and some of its subgroups, on $\M^{n+1}$. This leads to our first main result Theorem \ref{nparabolic} and the curious phenomenon described above. In Section \ref{sect:L2} we determine all cohomogeneity one actions on the Minkowski plane $\M^2$ up to orbit-equivalence. Finally, in Section \ref{sect:L3}, we determine all cohomogeneity one actions in the Minkowski space $\M^3$ up to orbit-equivalence.

We would like to thank Miguel S\'{a}nchez Caja for helpful discussions and suggestions.

\section{Preliminaries}\label{sect:preliminaries}

We denote by $\M^{n+1}$ the $(n+1)$-dimensional Minkowski space ($n \geq 1$) with the usual orientation, coordinates and inner product
\[
\langle u,v \rangle = \sum_{i=1}^n u_iv_i - u_{n+1}v_{n+1}.
\]
We denote by $e_1,\ldots,e_n,e_{n+1}$ the standard orthonormal basis of $\M^{n+1}$.

The orthogonal group $O_{n,1}$ of the above inner product is also known as the Lorentz group of $\M^{n+1}$ and the elements of it are so-called Lorentz transformations of $\M^{n+1}$. The isometry group $I(\M^{n+1})$ of $\M^{n+1}$ is the semidirect product
$I(\M^{n+1}) = O_{n,1} \ltimes_\tau \M^{n+1}$
with $\tau : O_{n,1} \times \M^{n+1} \to \M^{n+1}\ ,\ (x,u) \mapsto xu = x(u)$. The multiplication and inversion on $I(\M^{n+1})$ is given by
$(x,u)(y,v) = (xy,u+xv)$ and $(x,u)^{-1} = (x^{-1},-x^{-1}u)$
and the action of $I(\M^{n+1})$ on $\M^{n+1}$ is given by
$I(\M^{n+1}) \times \M^{n+1} \to \M^{n+1}\ ,\ ((x,u),p) \mapsto xp+u$.
The isometry group $I(\M^{n+1})$ has four connected components, corresponding to preserving and reversing space- and time-orientation respectively. We denote by $I^o(\M^{n+1}) = SO^o_{n,1} \ltimes \M^{n+1}$ the identity component of $I(\M^{n+1})$, where $SO^o_{n,1}$ is the subgroup of $O_{n,1}$ preserving both space- and time-orientation of $\M^{n+1}$. The connected noncompact real Lie group $SO^o_{n,1}$ is also known as the restricted Lorentz group of $\M^{n+1}$. For $n=1$ this is a one-dimensional abelian Lie group and for $n \geq 2$ it is a simple Lie group. The restricted Lorentz group $SO^o_{n,1}$ is a normal subgroup of the Lorentz group $O_{n,1}$.

The Lie algebra of $I(\M^{n+1})$ is the semidirect sum $\g{so}_{n,1} \oplus_\phi \M^{n+1}$ with
$\phi : \g{so}_{n,1} \times \M^{n+1} \to \M^{n+1}\ ,\ (X,u) \mapsto Xu = X(u)$. The Lie bracket on $\g{so}_{n,1} \oplus_\phi \M^{n+1}$ is given by
\[
[X+u,Y+v] = [X,Y]+(Xv-Yu) = (XY-YX)+(Xv-Yu).
\]
From this we get the adjoint representation
\[
\Ad((x,u))(Y+v) = xYx^{-1} + (xv - (xYx^{-1})u).
\]
The Lie algebra $\g{so}_{n,1}$ of the Lorentz group is given by
\[
\g{so}_{n,1} = \left\{X = \begin{pmatrix} B   &   b\\ b^t   &   0 \end{pmatrix} : B\in\g{so}_n,\ b\in\R^n \right\}.
\]
The Cartan involution $\theta(X) = -X^t$ of $\g{so}_{n,1}$ induces the Cartan decomposition
$\g{so}_{n,1}=\g{k}\oplus\g{p}$ with
\begin{align*}
\g{k} &= \left\{\begin{pmatrix} B  &  0 \\ 0  &  0 \end{pmatrix} : B\in\g{so}_n\right\} \cong \g{so}_n\ ,
&\g{p}&=\left\{\begin{pmatrix} 0   &   b\\ b^t   &   0 \end{pmatrix} : b\in\R^n \right\} \cong \R^n.
\end{align*}
The subspace
\[
\g{a} = \R \begin{pmatrix} 0   &   e_n \\ (e_n)^t   &   0 \end{pmatrix} \subset \g{p}
\]
is a maximal abelian subspace of $\g{p}$. Let $\g{so}_{n,1}=\g{g}_{-\alpha}\oplus\g{g}_0\oplus\g{g}_\alpha$
be the restricted root space decomposition of $\g{so}_{n,1}$ induced by $\g{a}$. Explicitly, we have
\begin{align*}
\g{g}_\alpha &=\left\{
\begin{pmatrix}
0   &   b   &   b \\
-b^t   &   0   &   0 \\
b^t   &   0  &   0
\end{pmatrix}
:b\in\R^{n-1}\right\}\cong\R^{n-1}\ ,\qquad \g{g}_{-\alpha}=\theta\g{g}_\alpha\ ,\\
\g{g}_0 &= \g{k}_0\oplus\g{a}\ ,\qquad \g{k}_0=\left\{
\begin{pmatrix}
B & 0 & 0 \\
0 & 0 & 0 \\
0 & 0 & 0
\end{pmatrix}:B\in\g{so}_{n-1}\right\}\cong\g{so}_{n-1}.
\end{align*}
Then $\g{n}=\g{g}_\alpha$ is an abelian subalgebra of $\g{so}_{n,1}$ and $\g{so}_{n,1}=\g{k}\oplus\g{a}\oplus\g{n}$
is an Iwasawa decomposition of $\g{so}_{n,1}$. The subalgebra
\[
\g{a} \oplus \g{n} =
\left\{
\begin{pmatrix}
0   &   b   &   b \\
-b^t   &   0   &   c \\
b^t   &   c  &   0
\end{pmatrix}:b\in\R^{n-1},\ c \in \R \right\}
\]
is a solvable subalgebra of $\g{so}_{n,1}$ and
\[
\g{k}_0 \oplus \g{a} \oplus \g{n} =
\left\{
\begin{pmatrix}
B   &   b   &   b \\
-b^t   &   0   &   c \\
b^t   &   c  &   0
\end{pmatrix}:B \in \g{so}_{n-1},\ b\in\R^{n-1},\ c \in \R \right\}
\]
is a parabolic subalgebra of $\g{so}_{n,1}$. We denote by $K \cong SO_n$, $K_0 \cong SO_{n-1}$, $A$ and $N$ the connected closed subgroups of $SO^o_{n,1}$ with Lie algebras $\g{k}$, $\g{k}_0$, $\g{a}$ and $\g{n}$ respectively. Then $K_0AN$ is a parabolic subgroup of $SO^o_{n,1}$ and $AN$ is a solvable subgroup of $SO^o_{n,1}$.

\section{The action of $SO^o_{n,1}$ on $\M^{n+1}$}\label{sect:isotropy action}

In this section we study the orbits of the action of the isotropy group $SO^o_{n,1}$ of $I^o(\M^{n+1})$ on $\M^{n+1}$. We give a detailed description of the orbit structure since it will be useful for investigating other isometric actions on $SO^o_{n,1}$. We first introduce some notations:
\begin{align*}
\SM^{n+1}  &= \{v \in \M^{n+1} : \langle v , v \rangle > 0\}
&\TM^{n+1}  &= \{v \in \M^{n+1} : \langle v , v \rangle < 0\},\\
\CM^n  &= \{v \in \M^{n+1} : \langle v , v \rangle = 0\},\\
\TM_+^{n+1}  &= \{v \in \TM^{n+1} : \langle v , e_{n+1} \rangle > 0\},
&\TM_-^{n+1}  &= \{v \in \TM^{n+1} : \langle v , e_{n+1} \rangle < 0\},\\
\CM_+^n  &= \{v \in \CM^n : \langle v , e_{n+1} \rangle > 0\},
&\CM_-^n  &= \{v \in \CM^n :  \langle v , e_{n+1} \rangle < 0\}.
\end{align*}
$\SM^{n+1}$ is the set of space-like vectors in $\M^{n+1}$, $\TM^{n+1}$ is the set of time-like vectors in $\M^{n+1}$ and $\CM^n$ is the set of light-like vectors in $\M^{n+1}$. The index $\pm$ refers to time-orientation. For $\M^2$ we also introduce
\begin{align*}
\SM^2_+  &= \{v \in \SM^2 : \langle v , e_1 \rangle > 0\},
&\SM^2_-  &= \{v \in \SM^2 :  \langle v , e_1 \rangle < 0\},\\
\CM^1_{++}  &= \{v \in \CM^1_+ : \langle v , e_1 \rangle > 0\},
&\CM^1_{+-}  &= \{v \in \CM^1_- : \langle v , e_1 \rangle > 0\},\\
\CM^1_{-+}  &= \{v \in \CM^1_+ : \langle v , e_1 \rangle < 0\},
&\CM^1_{--}  &= \{v \in \CM^1_- : \langle v , e_1 \rangle < 0\}.
\end{align*}

For $r \in \R_+$ we define
\begin{align*}
H^n_+(r)  &= \{v \in \TM^{n+1}_+ : \langle v , v \rangle = -r^2\},
&H^n_-(r)  &= \{v \in \TM^{n+1}_- : \langle v , v \rangle = -r^2\}.
\end{align*}
For $n \geq 2$, the induced metric on $H^n_+(r)$ is Riemannian and, for $n \geq 2$, $H^n_+(r)$ is the well-known hyperboloid model of $n$-dimensional real hyperbolic space with constant curvature $-r^{-2}$. We have $I(H^n_+(r)) = SO_{n,1}$. In particular, $H^n_+(r)$ is an orbit of $SO^o_{n,1}$. The isotropy group at a point is isomorphic to $K = SO_n$ and therefore, as a homogeneous space, $H^n_+(r) = SO^o_{n,1}/SO_n = SO^o_{n,1}/K$.
The set $H^n_-(r)$ is the image of $H^n_+(r)$ under the time-reversing isometry $\M^{n+1} \to \M^{n+1},(u_1,\ldots,u_n,u_{n+1}) \mapsto  (u_1,\ldots,u_n,-u_{n+1})$,
which implies that $H^n_-(r)$ is another orbit of $SO^o_{n,1}$ and therefore we again have $H^n_-(r) = SO^o_{n,1}/SO_n = SO^o_{n,1}/K$
as a homogeneous space.

For $r \in \R_+$ and $n \geq 2$ we define
\[
\dS^n(r) = \{v \in \SM^n : \langle v , v \rangle = r^2\},
\]
and for $n = 1$ we put
\begin{align*}
\dS^1_+(r)  &= \{v \in \SM^1_+ : \langle v , v \rangle = r^2\},
&\dS^1_-(r)  &= \{v \in \SM^1_- : \langle v , v \rangle = r^2\}.
\end{align*}
The induced metric on $\dS^n(r)$ is Lorentzian and $\dS^n(r)$ is the well-known hyperboloid model of $n$-dimensional de Sitter space with constant curvature $r^{-2}$. We have $I(\dS^n(r)) = O_{n,1}$. In particular, $\dS^n(r)$ is an orbit of $SO^o_{n,1}$ and the isotropy group at a point of $\dS^n(r)$ is isomorphic to $SO^o_{n-1,1}$. Thus, as a homogeneous space, we have $\dS^n(r) = SO^o_{n,1}/SO^o_{n-1,1}$. Topologically, $\dS^n(r)$ is homeomorphic to $\R \times S^{n-1}$, the product of a line and an $(n-1)$-dimensional sphere. For $n=1$ the sphere $S^0$ consists just of two points, which is the reason why we need to treat this special case separately. In this case both $\dS^1_+(r)$ and $\dS^1_-(r)$ are orbits of the one-dimensional Lie group $SO^o_{1,1}$ and the isotropy groups are trivial, that is, as homogeneous spaces we have $\dS^1_+(r) = SO^o_{1,1}$ and $\dS^1_-(r) = SO^o_{1,1}$.

Finally, $SO^o_{n,1}$ leaves $\CM^n$ invariant. For $n \geq 2$ the orbits of the action are the single point $\{0\}$ and the two light cones $\CM^n_+$ and $\CM^n_-$. The isotropy group of $SO^o_{n,1}$ at a point in $\CM^n_+$ or $\CM^n_-$ is isomorphic to the subgroup $K_0N$ of the parabolic subgroup $K_0AN$ of $SO^o_{n,1}$. Thus, as homogeneous spaces, we have $\CM^n_+ = SO^o_{n,1}/K_0N$ and $\CM^n_- = SO^o_{n,1}/K_0N$.
Note that $K_0N$ is isomorphic to the special Euclidean group $SO_{n-1} \ltimes \R^{n-1}$ of $\R^{n-1}$. For $n = 1$ the orbits of the action are the single point $\{0\}$ and the four light rays $\CM^1_{++}$, $\CM^1_{+-}$, $\CM^1_{-+}$ and $\CM^1_{--}$.

Altogether it follows that we have the following decomposition $\FM_{SO^o_{n,1}}$of $\M^{n+1}$ into orbits of $SO^o_{n,1}$. For $n \geq 2$ we get
\[
\FM_{SO^o_{n,1}} = \{0\} \cup \CM^n_\pm \cup \bigcup_{r \in \R_+} H^n_\pm(r) \cup \bigcup_{r \in \R_+} \dS^n(r),
\]
and for $n = 1$ we get
\[
\FM_{SO^o_{1,1}} = \{0\} \cup \CM^1_{\pm\pm} \cup \bigcup_{r \in \R_+} H^1_\pm(r) \cup \bigcup_{r \in \R_+} \dS^1_\pm(r).
\]
It is easy to see that for all $n \geq 1$ the orbit space with the quotient topology is not a Hausdorff space.

\section{The action of the parabolic subgroup $K_0AN$ of $SO^o_{n,1}$ on $\M^{n+1}$}\label{sect:parabolic action}

In this section we assume $n \geq 2$. The noncompact simple real Lie group $SO^o_{n,1}$ has, up to conjugacy, exactly one parabolic subgroup, namely $Q = K_0AN$. As subgroups of $SO^o_{n,1}$, we have
\begin{align*}
K_0 & =
\left\{
\begin{pmatrix}
B & 0 & 0 \\
0 & 0 & 0 \\
0 & 0 & 0
\end{pmatrix}:B \in SO_{n-1}\right\}\ , \\
A & =
\left\{
\begin{pmatrix}
I_{n-1} & 0 & 0 \\
0 & \cosh(t) & -\sinh(t) \\
0 & -\sinh(t) & \cosh(t)
\end{pmatrix}:t \in \R\right\}\ , \\
N & =
\left\{
\begin{pmatrix}
I_{n-1} & b & b \\
-b^t & 1-\frac{1}{2}b^tb & -\frac{1}{2}b^tb \\
b^t & \frac{1}{2}b^tb & 1 + \frac{1}{2}b^tb
\end{pmatrix}:b \in \R^{n-1}\right\}\ .
\end{align*}

The solvable subgroup $AN$ of $SO^o_{n,1}$ acts transitively on the hyperbolic spaces $H^n_+(r)$ and $H^n_-(r)$. This implies that $Q$, and every subgroup $K'AN \subset K_0AN$ of $Q = K_0AN$ with $K'\subset K_0$, acts transitively on the hyperbolic spaces $H^n_+(r)$ and $H^n_-(r)$.

The special Euclidean group $K_0N$ fixes the vector $w_0 = e_n - e_{n+1} \in \CM^n_-$ and the orbit $Q \cdot w_0$ is equal to $Q \cdot w_0 = A \cdot w_0 = \R_+w_0 = \R w_0 \cap \CM^n_-$.
Similarly, we have $Q \cdot (-w_0) = A \cdot (-w_0) = \R_-w_0 = \R w_0 \cap \CM^n_+$.
The solvable subgroup $AN$ acts transitively on $\CM^n_+ \setminus \R_-w_0$ and on $\CM^n_- \setminus \R_+w_0$. Altogether this implies that $Q$, and every subgroup $K'AN \subset K_0AN$ of $Q = K_0AN$ with $K'\subset K_0$, has exactly two orbits on the positive light cone $\CM^n_+$, namely $\R_-w_0$ and $\CM^n_+ \setminus \R_-w_0$. The argument is analogous for the negative light cone $\CM^n_-$.

The situation becomes more interesting when restricting the action to the de Sitter space $\dS^n(r)$. We define an $n$-dimensional degenerate subspace $\W^n$ of $\M^{n+1}$ by
\[
\W^n = \R^{n-1} \oplus \R w_0 =  \R e_1 \oplus \ldots \oplus \R e_{n-1} \oplus \R (e_n - e_{n+1}) .
\]
The intersection $\W^n \cap \CM^n_-$ is precisely the orbit $Q \cdot w_0 = AN \cdot w_0$. The intersection $\W^n \cap \dS^n(r)$ is equal to the cylinder $Z^{n-1}(r)$ defined by $\W^n \cap \dS^n(r) = S^{n-2}(r) \times \R w_0$,
where $S^{n-2}(r)$ is the $(n-2)$-dimensional Euclidean sphere with radius $r$ in $\R^{n-1} \subset \W^n$. For $n = 2$ the cylinder $Z^{n-1}(r)$ is the union of the two disjoint lines $re_1 + \R w_0$ and $-re_1 + \R w_0$. Each of the two lines is an orbit of $AN = Q$ (note that $K_0 = \{I_3\}$ if $n = 2$). The complement of these two lines in $\dS^2(r)$ consists of two connected components, and each of them is an orbit of $AN = Q$. We thus have:

\begin{proposition}\label{2parabolic}
The action of the parabolic subgroup $Q = AN$ of $SO^o_{2,1}$ on $\M^3$ is of cohomogeneity one. The orbits are
\begin{itemize}
\item[(i)] The hyperbolic planes $H^2_+(r)$ and $H^2_-(r)$, $r \in \R_+$;
\item[(ii)] The single point $\{0\}$, the two open rays $\R_+w_0$ and $\R_-w_0$ and their complements $\CM^2_+ \setminus \R_-w_0$ and $\CM^2_- \setminus \R_+w_0$, where $w_0 = e_2 - e_3$;
\item[(iii)] The two lines $re_1 + \R w_0$ and $-re_1 + \R w_0$ and the two connected components of the complement of these two lines in $\dS^2(r)$, $r \in \R_+$.
\end{itemize}
\end{proposition}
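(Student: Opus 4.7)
The plan is to split the argument into three parts mirroring the three bullet points. Parts (i) and (ii) require little extra work: the transitivity of the Iwasawa factor $AN$ on each hyperbolic plane $H^2_\pm(r)$ has already been recorded, and the orbit analysis on $\{0\} \cup \CM^2$ carried out in the paragraphs preceding the proposition applies verbatim in dimension $n=2$, because for $n=2$ the group $K_0 = \{I_3\}$ is trivial and so $Q = K_0AN = AN$. The substantive content is therefore item (iii), the $Q$-action on the de Sitter plane $\dS^2(r)$.

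For (iii), I would introduce coordinates adapted to the light-like direction $w_0 = e_2 - e_3$. Writing a general vector as $v = ae_1 + be_2 + ce_3$ and setting $u = b+c$, $w = b-c$, the quadric $\dS^2(r)$ becomes $a^2 + uw = r^2$, and a direct matrix computation with the explicit forms of $A$ and $N$ gives
\begin{align*}
A_t \cdot (a, u, w) &= (a,\ e^{-t} u,\ e^t w), \\
N_\beta \cdot (a, u, w) &= (a + \beta u,\ u,\ w - 2\beta a - \beta^2 u).
\end{align*}
In particular $u$ is $N$-invariant and only rescaled by $A$, so the sign of $u$ is an orbit invariant, and the locus $\{u = 0\} \cap \dS^2(r)$ forces $a = \pm r$; that is, $\{u = 0\} \cap \dS^2(r)$ is precisely the union of the two lines $\pm re_1 + \R w_0$.

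It remains to verify that each line is a single $Q$-orbit and that each of the two components $\{u > 0\}$, $\{u < 0\}$ is a single $Q$-orbit. On the line $a = \pm r$, $u = 0$ the formulas simplify to $A_t : (a,0,w) \mapsto (a, 0, e^t w)$ and $N_\beta : (a, 0, w) \mapsto (a, 0, w - 2\beta a)$, and since $a \ne 0$ the subgroup $N$ alone translates the $w$-coordinate surjectively. For a point in $\{u \ne 0\}$ I would reduce to a normal form: apply $N_\beta$ with $\beta = -a/u$ to kill the first coordinate, then an $A_t$ with $e^{-t} = 1/|u|$ to rescale $u$ to $\pm 1$; the constraint $a^2 + uw = r^2$ then forces $w = \pm r^2$. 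This shows that $\{u > 0\}$ is the orbit of $(0,1,r^2)$ and $\{u < 0\}$ is the orbit of $(0,-1,-r^2)$.

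Once all orbits are enumerated, cohomogeneity one is immediate: the hyperbolic planes, the two open halves of each $\dS^2(r)$ and each $\CM^2_\pm \setminus \R_\mp w_0$ are $2$-dimensional hypersurfaces of $\M^3$, hence principal, while the two lines, the light rays and the origin are singular. The only real obstacle is the bookkeeping on $\dS^2(r)$; the change of variables $u = b+c$, $w = b-c$ diagonalises the $A$-action and makes $u$ an $N$-invariant, reducing the whole orbit analysis to inspection of the two explicit one-parameter flows above.
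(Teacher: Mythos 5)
Your proposal is correct. For parts (i) and (ii) you proceed exactly as the paper does, by invoking the transitivity of $AN$ on $H^2_\pm(r)$ and the orbit analysis on $\{0\}\cup\CM^2_\pm$ from the discussion of the parabolic action for general $n\geq 2$, noting $K_0=\{I_3\}$ when $n=2$. The only real divergence is in part (iii). The paper handles the de Sitter piece by writing down how $A$ and $N$ act on points $x+sw_0$ of the cylinder $Z^{n-1}(r)=\W^n\cap\dS^n(r)$ (so that for $n=2$ each of the two lines $\pm re_1+\R w_0$ is seen to be an orbit, with $N$ translating along $\R w_0$), and then exhibits the orbit of $AN$ through $re_n$ as an explicitly parametrized surface which it identifies with one connected component of $\dS^n(r)\setminus Z^{n-1}(r)$, the other component being the orbit through $-re_n$. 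You instead pass to null coordinates $u=b+c$, $w=b-c$, in which your displayed formulas for $A_t$ and $N_\beta$ are correct: $u$ is $N$-invariant and only positively rescaled by $A$, so the sign of $u$ is an orbit invariant whose zero locus on $\dS^2(r)$ is exactly the two lines, and a two-step normal-form reduction (kill $a$ with $\beta=-a/u$, then rescale $u$ to $\pm1$) shows each of $\{u>0\}$ and $\{u<0\}$ is a single orbit, necessarily a connected component of the complement since $AN$ is connected. The two routes are computationally equivalent, but yours gives the separation of the open pieces and the lines a priori via the invariant $u$, without having to recognize an explicit parametrized orbit as a full component; the paper's version, on the other hand, is set up so that the same computation works verbatim for all $n\geq 2$, which is what it needs for Theorem \ref{nparabolic}.
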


If $n > 2$, then $Z^{n-1}(r)$ is connected and the complement $\dS^n(r) \setminus Z^{n-1}(r)$ of the cylinder $Z^{n-1}(r)$ in the de Sitter space $\dS^n(r)$ has two connected components. The action of $K_0$, $A$ and $N$ on a point $x + sw_0 \in Z^{n-1}(r)$ is given by
\begin{align*}
x + sw_0  &\mapsto Bx + sw_0,
&x + sw_0  &\mapsto x + e^tsw_0,
&x + sw_0  &\mapsto x + (s+b^tx)w_0,
\end{align*}
with $x \in S^{n-2}(r) \subset \R^{n-1}$ and $s \in \R$, where $B \in SO_{n-1}$, $t \in \R$ and $b \in \R^{n-1}$, respectively. It follows that $K_0AN$ leaves the cylinder $Z^{n-1}(r)$ invariant. More precisely, $K_0 = SO_{n-1}$ acts canonically on $S^{n-2}(r)$ and trivially on $\R w_0$, $A$ and $N$ act trivially on $S^{n-2}(r)$, $N$ acts transitively on $\R w_0$, and $A$ has three orbits on $\R w_0$ (namely $\{0\}$, $\R_+ w_0$ and $\R_- w_0$). This shows that the orbits of $AN$ on $Z^{n-1}(r)$ are precisely the lines $p + \R w_0$ with $p \in S^{n-2}(r) \subset Z^{n-1}(r)$. Since $K_0$ acts transitively on $S^{n-2}(r)$ we see that the parabolic subgroup $Q = K_0AN$ acts transitively on $Z^{n-1}(r)$. If $K'$ is a subgroup of $K$, then the orbits of $K'AN$ on the cylinder $Z^{n-1}(r)$ correspond bijectively to the orbits of $K'$ on the sphere $S^{n-2}(r)$.

The orbit of $AN$ through $re_n$ consists of all points of the form
\[
r\left(b_1e_1 + \ldots + b_{n-1}e_{n-1} + \cosh(t)\left(1-\frac{1}{2}|b|^2\right)e_n - \frac{1}{2}\sinh(t)|b|^2e_{n+1}\right) \in \dS^n(r)
\]
with $t \in \R$ and $b \in \R^{n-1}$, which is exactly one of the two connected components of $\dS^n(r) \setminus Z^{n-1}(r)$. The other connected component can be obtained by taking the orbit of $AN$ through $-re_n$. Since $K_0AN$ leaves $Z^{n-1}(r)$ invariant we conclude that every subgroup $K'AN \subset K_0AN$ with $K' \subset K_0$ acts transitively on each of the two connected components of $\dS^n(r) \setminus Z^{n-1}(r)$. However, the action of such a subgroup $K'AN$ on the cylinder $Z^{n-1}(r)$ is not transitive in general. Altogether we can now conclude:

\begin{theorem}\label{nparabolic}
Let $K'$ be a subgroup of $K_0$ and $n \geq 3$. The action of the subgroup $K'AN \subset K_0AN = Q$ of $SO^o_{n,1}$ on $\M^{n+1}$ is of cohomogeneity one. The orbits are:
\begin{itemize}
\item[(i)] The hyperbolic spaces $H^n_+(r)$ and $H^n_-(r)$, $r \in \R_+$;
\item[(ii)] The single point $\{0\}$, the two open rays $\R_+w_0$ and $\R_-w_0$ and their complements $\CM^n_+ \setminus \R_-w_0$ and $\CM^n_- \setminus \R_+w_0$, where $w_0 = e_n - e_{n+1}$;
\item[(iii)] The two connected components of the complement of the cylinder $Z^{n-1}(r)$ in $\dS^n(r)$ and
\[
\bigcup_{L \in S^{n-2}(r)/K'} L + \R w_0,
\]
where $S^{n-2}(r)/K'$ parametrizes the orbits of the $K'$-action on $S^{n-2}(r) \subset Z^{n-1}(r)$, $r \in \R_+$.
\end{itemize}
\end{theorem}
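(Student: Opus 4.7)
The plan is to work through the three orbit families (i), (ii), (iii) in turn, in each case using the fact that every $K'AN$-orbit is contained in some $SO^o_{n,1}$-orbit, so it suffices to decompose the orbits listed in $\FM_{SO^o_{n,1}}$ of Section \ref{sect:isotropy action} further under the action of the subgroup $K'AN$. Since almost all of the local work has already been carried out in the paragraphs immediately before the statement, the bulk of the argument is a synthesis rather than a fresh calculation. Part (i) is immediate: the hyperboloids $H^n_\pm(r)$ are single $AN$-orbits via the Iwasawa decomposition, and this transitivity is inherited by the larger group $K'AN$. Part (ii) is also immediate: since $K'$ fixes $w_0$ and preserves each light cone, the $Q$-orbit decomposition $\R_\mp w_0$ versus $\CM^n_\pm \setminus \R_\mp w_0$ persists for $K'AN$, and $\{0\}$ remains fixed.

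The substantive case is (iii). I would split $\dS^n(r)$ into the cylinder $Z^{n-1}(r) = S^{n-2}(r) \times \R w_0$ and its complement. On a point $x + s w_0 \in Z^{n-1}(r)$, the three factor groups act via
\[
x + s w_0 \mapsto Bx + s w_0, \quad x + s w_0 \mapsto x + e^t s w_0, \quad x + s w_0 \mapsto x + (s + b^t x) w_0,
\]
with $B \in K'$, $t \in \R$ and $b \in \R^{n-1}$. Because $x \neq 0$ on $S^{n-2}(r)$, the linear functional $b \mapsto b^t x$ is surjective, so $N$ alone already acts transitively along each fiber $\{x\} + \R w_0$, while $K'$ permutes the base-points $x \in S^{n-2}(r)$. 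This identifies the $K'AN$-orbits on $Z^{n-1}(r)$ with the sets $L + \R w_0$ indexed by $L \in S^{n-2}(r)/K'$. For the complement, the explicit calculation of the $AN$-orbit of $\pm r e_n$ carried out before the theorem exhibits each of the two connected components of $\dS^n(r) \setminus Z^{n-1}(r)$ as a single $AN$-orbit; to see that $K'AN$ does not merge them, I would observe that the linear functional $u \mapsto u_n + u_{n+1}$ is preserved up to a positive multiple by each of $K'$, $A$ and $N$, and has constant sign on each of the two components.

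Cohomogeneity one then follows at once, since the hyperboloids $H^n_\pm(r)$ already form a one-parameter family of codimension-one principal orbits in $\M^{n+1}$. The main obstacle is really the cylinder analysis in (iii): one must verify that $N$ acts transitively in the $w_0$-direction over every nonzero base point (which hinges on the surjectivity of $b \mapsto b^t x$), and that $K'AN$ does not fuse the two components of $\dS^n(r) \setminus Z^{n-1}(r)$; everything else is a direct compilation of facts already recorded in the paragraphs leading up to the theorem.
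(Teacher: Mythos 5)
Your proposal is correct and follows essentially the same route as the paper: decompose $\M^{n+1}$ along the $SO^o_{n,1}$-orbit types, use $AN$-transitivity on $H^n_\pm(r)$ and on the light cones minus the rays $\R_\mp w_0$, and analyse the cylinder $Z^{n-1}(r)$ via the explicit $K_0$-, $A$- and $N$-actions on $x+sw_0$. The only variation is cosmetic: where the paper implicitly uses that the connected group $K'AN$ preserves $Z^{n-1}(r)$ and hence each component of $\dS^n(r)\setminus Z^{n-1}(r)$, you instead observe that $\langle u,w_0\rangle = u_n+u_{n+1}$ is rescaled by positive factors, which is an equally valid (and slightly more explicit) justification.
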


We denote by $\FM_{K'AN}$ the decomposition of $\M^{n+1}$ into the orbits of the action of $K'AN$.

We see from Theorem \ref{nparabolic} that the orbits of $K'AN$ on $\M^{n+1} \setminus \W^n$ are independent of the choice of $K'$. Thus we get the following remarkable consequence of Theorem \ref{nparabolic}:

\begin{corollary}\label{denseopen}
There exist cohomogeneity one actions on $\M^{n+1}$, $n \geq 3$, which are orbit-equivalent on the complement of an $n$-dimensional degenerate subspace $\W^n$ of $\M^{n+1}$ and not orbit-equivalent on $\W^n$.
\end{corollary}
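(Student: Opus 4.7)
The plan is to read the corollary off directly from Theorem \ref{nparabolic} by comparing two extreme choices of $K' \subseteq K_0 = SO_{n-1}$: the full group $K' = K_0$ and the trivial group $K' = \{e\}$, which give the cohomogeneity one actions of $K_0 AN$ and of $AN$ on $\M^{n+1}$, respectively.

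The first step is to restrict the orbit decompositions $\FM_{K_0 AN}$ and $\FM_{AN}$ to $\M^{n+1} \setminus \W^n$. Using the elementary computations $\W^n \cap \TM^{n+1} = \emptyset$, $\W^n \cap \CM^n = \R w_0$ and $\W^n \cap \dS^n(r) = Z^{n-1}(r)$, the orbits listed in Theorem \ref{nparabolic} that meet $\M^{n+1} \setminus \W^n$ are precisely the hyperbolic spaces $H^n_\pm(r)$, the punctured light cones $\CM^n_\pm \setminus \R_\mp w_0$ and the two connected components of $\dS^n(r) \setminus Z^{n-1}(r)$ for $r \in \R_+$. None of these depends on $K'$, so $\FM_{K_0 AN}$ and $\FM_{AN}$ restrict to literally the same partition of $\M^{n+1} \setminus \W^n$, and the two actions are orbit-equivalent there via the identity isometry.

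The second step is to examine the orbits on $\W^n$. The orbits $\{0\}$, $\R_+ w_0$ and $\R_- w_0$ are common to both decompositions. On each cylinder $Z^{n-1}(r)$, however, the decomposition genuinely depends on $K'$: since $K_0 = SO_{n-1}$ acts transitively on $S^{n-2}(r)$, part (iii) of Theorem \ref{nparabolic} yields a single orbit of $K_0 AN$ on $Z^{n-1}(r)$, namely the entire $(n-1)$-dimensional cylinder, whereas for $K' = \{e\}$ the index set $S^{n-2}(r)/K'$ equals $S^{n-2}(r)$, so the orbits of $AN$ on $Z^{n-1}(r)$ are the $1$-dimensional affine lines $p + \R w_0$ with $p \in S^{n-2}(r)$.

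The final step is to rule out orbit-equivalence on $\W^n$. Any such equivalence would be implemented by an isometry of $\M^{n+1}$ mapping orbits of $K_0 AN$ to orbits of $AN$; being a diffeomorphism, such a map preserves orbit dimensions. But the cylinder $Z^{n-1}(r) \subset \W^n$ is a single orbit of $K_0 AN$ of dimension $n - 1 \geq 2$, while every orbit of $AN$ contained in $\W^n \setminus (\{0\} \cup \R w_0)$ has dimension $1$. This dimension mismatch is the main (and essentially only) obstacle in the argument; once it is observed, the corollary is immediate.
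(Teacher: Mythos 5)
Your proposal is correct and follows essentially the same route as the paper: Corollary \ref{denseopen} is read off directly from Theorem \ref{nparabolic} by comparing different choices of $K'\subseteq K_0$ (here $K'=K_0$ versus $K'=\{e\}$), noting that the orbits off $\W^n$ do not depend on $K'$ while those on the cylinders $Z^{n-1}(r)\subset\W^n$ do. Your explicit dimension count (a single $(n-1)$-dimensional orbit $Z^{n-1}(r)$ for $K_0AN$ versus only $1$-dimensional orbits $p+\R w_0$ for $AN$, which no isometry can match) merely spells out the non-equivalence on $\W^n$ that the paper leaves implicit.
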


Thus, even if the orbit structure of a cohomogeneity one action on $\M^{n+1}$ is known on an open and dense subset of $\M^{n+1}$, it does not necessarily determine the orbit structure on the entire space $\M^{n+1}$. Such a phenomenon cannot occur in Riemannian geometry. The orbit structure of a cohomogeneity one action on a Riemannian manifold is uniquely determined by a single orbit of the action.

\section{Cohomogeneity one actions on $\M^{2}$}\label{sect:L2}

In this section we classify cohomogeneity one actions on the $2$-dimensional Minkowski space $\M^2$ up to orbit-equivalence. We start by discussing some examples of such actions. Let $H$ be a connected subgroup of $G = I^o(\M^2) = SO^o_{1,1} \ltimes \M^2$ acting on $\M^2$ with cohomogeneity one. We denote by $\g{h} \subset \g{g} = \g{so}_{1,1} + \M^2$ the Lie algebra of $H$ and by $\FM_{H}$ the (possibly singular) foliation of $\M^2$ by the orbits of the action of $H$.

Let $H \in \{\R^1,\M^1,\W^1\}$ with $\R^1 = \R e_1$, $\M^1 = \R e_2$ and $\W^1 = \R(e_1 - e_2)$. Then $H$ is a one-dimensional subgroup of the translation group $\M^2 \subset G$. The orbits of $H$ are the affine lines in $\M^2$ that are parallel to the line $\R^1$, $\M^1$ and $\W^1$ respectively and hence form a totally geodesic foliation of $\M^2$. The orbit space is isomorphic to $\R$.

Let $H = SO^o_{1,1}$. As we saw in Section \ref{sect:isotropy action}, $\FM_{SO^o_{1,1}}$ consists of the single point $\{0\}$, the four half-lines $\CM^1_{\pm\pm}$ and the hyperbolas $H^1_\pm(r)$ and $\mathbb{S}^1_\pm(r)$, $r \in \R_+$. The orbit space is a non-Hausdorff space and isomorphic to the union of five points and four copies of $\R$.

\begin{theorem}\label{th:L2}
Let $H$ be a connected subgroup of $I^o(\M^2)$ acting on $\M^2$ with cohomogeneity one. Then the action of $H$ is orbit-equivalent to the action of $\R^1$, $\M^1$, $\W^1$ or $SO^o_{1,1}$.
\end{theorem}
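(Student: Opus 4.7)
The plan is to split on the dimension of $\g{h}$. Since $\dim I^o(\M^2) = 3$ and the translation subgroup $\M^2 \subset I^o(\M^2)$ already acts transitively, $H$ can be of cohomogeneity one only if $\dim \g{h} \in \{1,2\}$. I would classify one-dimensional subalgebras up to $\Ad$-conjugation in $I^o(\M^2)$ (which visibly preserves orbit-equivalence classes of the resulting foliations) and show that two-dimensional subalgebras never produce a cohomogeneity one action.

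Fix $X \in \g{so}_{1,1}$ with matrix $\begin{pmatrix} 0 & 1 \\ 1 & 0 \end{pmatrix}$, so that $\g{g} = \R X \oplus \M^2$, and for $\dim \g{h} = 1$ write $\g{h} = \R(\lambda X + v)$ with $v \in \M^2$. If $\lambda = 0$, then $H$ is a one-parameter translation group whose orbits are the affine lines parallel to $\R v$; up to an isometry of $\M^2$ this foliation depends only on the causal type of $v$, yielding the three models $\R^1$, $\M^1$, $\W^1$. If $\lambda \neq 0$, I may rescale to $\lambda = 1$; since the linear map $X : \M^2 \to \M^2$ is invertible (its matrix has determinant $-1$), the adjoint formula from Section \ref{sect:preliminaries},
\[
\Ad((I,u))(X + v) = X + v - Xu,
\]
allows the choice $u = X^{-1} v$, conjugating $\g{h}$ to $\R X$. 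Hence $H$ is conjugate in $I^o(\M^2)$ to $SO^o_{1,1}$.

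For $\dim \g{h} = 2$, either $\g{h} \subset \M^2$, whence $\g{h} = \M^2$ and $H$ acts transitively, or $\g{h}$ surjects onto $\g{g}/\M^2 \cong \R X$ and therefore splits as $\g{h} = \R(X + v) \oplus \R w$ with $0 \neq w \in \M^2$. The subalgebra condition forces $[X + v, w] = Xw \in \g{h} \cap \M^2 = \R w$, so $w$ is an eigenvector of $X$; these eigenvectors are precisely the null directions $\R(e_1 \pm e_2)$. A translation conjugation then removes $v$, reducing $\g{h}$ to $\R X \oplus \R w$. Parametrising a generic $h \in H$ as $\exp(sX)\exp(tw)$ and computing
\[
h \cdot e_1 = \exp(sX) e_1 + t \exp(sX) w,
\]
and using $Xw = \pm w$, hence $\exp(sX)w = e^{\pm s} w$, one checks that the orbit of $e_1$ sweeps out the full open half-plane on one side of the null line $\R w$. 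Hence the principal orbits are two-dimensional, the action has cohomogeneity zero, and the two-dimensional case is ruled out.

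Assembling the cases leaves exactly the four models $\R^1$, $\M^1$, $\W^1$, and $SO^o_{1,1}$. The step I expect to demand the most care is the orbit computation in the two-dimensional case: I must verify that the orbit of a generic non-null point is genuinely an open two-dimensional region rather than a lower-dimensional subset that would silently reinstate cohomogeneity one. This amounts to tracking the two-parameter map $(s,t) \mapsto \exp(sX) e_1 + t e^{\pm s} w$ and checking its image is an open half-plane. Everything else is routine: the adjoint calculation in the one-dimensional case and the classification of causal types of translation directions (plus the observation that a reflection in $I(\M^2)$ identifies the two null rays, so the choice of $w \in \R(e_1 \pm e_2)$ does not split the analysis further).
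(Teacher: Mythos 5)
Your proposal is correct and follows essentially the same route as the paper: you conjugate away the translation part of a one-dimensional $\g{h}$ using the invertibility of $Y=\begin{pmatrix}0&1\\1&0\end{pmatrix}$ (the paper takes $u=Yv$ and uses $Y^2=I_2$, which is the same conjugation), reduce pure translation groups to $\R^1$, $\M^1$, $\W^1$ by the causal type of the direction, and eliminate the only possible two-dimensional group, conjugate to $SO^o_{1,1}\ltimes\W^1$, because its generic orbits are open half-planes. The only difference is organizational — you split by $\dim\g{h}$ while the paper splits by $\dim(\g{h}\cap\M^2)$ — so the content coincides.
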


\begin{proof}
The intersection $\g{h}\cap\M^2$ of $\g{h}$ with the translation part of $\g{g}$ is either zero- or one-dimensional.

If $\g{h}\cap\M^2$ is one-dimensional and time-like or space-like, we must have $H=H\cap\M^2$ since $SO^o_{1,1}$ acts transitively on the set of time-like lines and space-like lines respectively. In this case the action of $H$ is orbit-equivalent to the action of $\R^1$ or $\M^1$. If $\g{h}\cap\M^2$ is one-dimensional and light-like, we can assume that $\g{h}\cap\M^2 = \W^1$ since $O_{1,1}$ acts transitively on the set of the two light-like lines in $\M^2$. In this case we obtain that $H = \W^1$ or $H = SO^o_{1,1} \ltimes \W^1$. However, the latter group has only three orbits on $\M^2$, namely the line $\W^1$ and the two open half-planes in $\M^2$ bounded by it. Thus we must have $H = \W^1$.

If $\g{h}\cap\M^2$ is zero-dimensional, then we have $\g{h} = \R(Y+v)$ with $Y = \begin{pmatrix} 0 & 1 \\ 1 & 0 \end{pmatrix} \in \g{so}_{1,1}$ and $v \in \M^2$. Thus $H$ is of the form $H = \{\Exp(t(Y+v)) : t \in \R\}$. Since $\Ad((I_2,Yv))(Y+v) = Y + (v - Y^2v) = Y$ we have $\Ad((I_2,Yv))(\g{h}) = \R Y = \g{so}_{1,1}$. This implies that the actions of $H$ and $SO^o_{1,1}$ on $\M^2$ are conjugate and hence orbit-equivalent.
\end{proof}

\section{Cohomogeneity one actions on $\M^{3}$}\label{sect:L3}

In this section we classify cohomogeneity one actions on the $3$-dimensional Minkowski space $\M^3$ up to orbit-equivalence. We first fix some notations. The identity component $G = I^o(\M^3)$ of the full isometry group of $\M^3$ is the semidirect product $G = SO^o_{2,1} \ltimes \M^3$ and its Lie algebra $\g{g}$ is the semidirect sum $\g{so}_{2,1} + \M^3$ (see Section \ref{sect:preliminaries}). As described in Section~\ref{sect:preliminaries}, we denote by $SO^o_{2,1}= KAN$ the Iwasawa composition of $SO^o_{2,1}$ and by $\g{so}_{2,1} = \g{k} + \g{a} + \g{n}$ the Iwasawa decomposition of $\g{so}_{2,1}$. We define
\begin{align*}
Y_{\g{k}} &= \begin{pmatrix} 0 & -1 & 0 \\ 1 & 0 & 0 \\ 0 & 0 & 0 \end{pmatrix} \in \g{k}\,,
&Y_{\g{a}} &= \begin{pmatrix} 0 & 0 & 0 \\ 0 & 0 & -1 \\ 0 & -1 & 0 \end{pmatrix} \in \g{a}\,,
&Y_{\g{n}} &= \begin{pmatrix} 0 & 1 & 1 \\ -1 & 0 & 0 \\ 1 & 0 & 0 \end{pmatrix} \in \g{n}\,.
\end{align*}
Then we have $[Y_{\g{k}},Y_{\g{a}}] = Y_{\g{k}} + Y_{\g{n}}$,
$[Y_{\g{k}},Y_{\g{n}}] = -Y_{\g{a}}$,
$[Y_{\g{a}},Y_{\g{n}}] = Y_{\g{n}}$,
\begin{align*}
Y_{\g{k}}u &= ( -u_2 , u_1 , 0 )^t\ ,
&Y_{\g{a}}u &= (0 , -u_3 , -u_2 )^t\ ,
&Y_{\g{n}}u &= ( u_2+u_3 , -u_1 , u_1 )^t\ ,
\end{align*}
and
\begin{align*}
k_t = \Exp(tY_{\g{k}}) & = \begin{pmatrix} \cos(t) & -\sin(t) & 0 \\ \sin(t) & \cos(t) & 0 \\ 0 & 0 & 1 \end{pmatrix} \in K \ ,\\
a_t = \Exp(tY_{\g{a}}) & = \begin{pmatrix} 1 & 0 & 0 \\ 0 & \cosh(t) & -\sinh(t) \\ 0 & -\sinh(t) & \cosh(t) \end{pmatrix}\in A\ , \\
n_t = \Exp(tY_{\g{n}}) & = \begin{pmatrix} 1 & t & t \\ -t & 1-\frac{1}{2}t^2 & -\frac{1}{2}t^2  \\ t & \frac{1}{2}t^2 & 1 + \frac{1}{2}t^2 \end{pmatrix}\in N\ . \\
\end{align*}
Finally, we define the light-like line $\ell \subset \M^3$ by $\ell = \R(e_2-e_3)$.

We start by discussing some examples of cohomogeneity one actions on $\M^3$. For $H \subset G$ we denote by $\FM_{H}$ the collection of orbits of the action of $H$ on $\M^3$. We will distinguish several types of actions.

\medskip
Type (I): \textit{$\FM_{H}$ is invariant under a two-dimensional translation group.}

Let $\g{h} \in \{\R^2,\M^2,\W^2\}$. Then $\g{h}$ is a two-dimensional abelian subalgebra of the translation algebra $\M^3$ in $\g{g}$. The corresponding connected subgroup $H$ of $G$ acts on $\M^3$ with cohomogeneity one and $\FM_H$ is a totally geodesic foliation of $\M^3$ whose leaves consist of the affine planes in $\M^3$ that are parallel to $\R^2$, $\M^2$ and $\W^2$ respectively:
\begin{align*}
\FM_{\R^2} &= \bigcup_{t \in \R} (te_3 + \R^2)\ ,
& \FM_{\M^2} &= \bigcup_{t \in \R} (te_1 + \M^2)\ ,
& \FM_{\W^2} &= \bigcup_{t \in \R} (t(e_2+e_3) + \W^2).
\end{align*}
The orbit space is isomorphic to $\R$.

\medskip
Type (II): \textit{$\FM_{H}$ is invariant under a one-dimensional translation group.}

\medskip
Type (II)$_s$: \textit{$\FM_{H}$ is invariant under a one-dimensional space-like translation group.}

The set $\g{h} = \g{a} \oplus \R e_1$
is a two-dimensional abelian subalgebra of $\g{g}$ and $H = \Exp(\g{h}) = A \times \R e_1$ is a two-dimensional connected abelian subgroup of $G$. The action of $A$ leaves the foliation $\FM_{\M^2}$ invariant and on each leaf $te_1 + \M^2 \in \FM_{\M^2}$ the orbits consist of the single point $\{te_1\}$, the four half-lines $te_1 + \CM^1_{\pm\pm}$, the two hyperbolas $te_1 + H^1_\pm(r)$ and the two hyperbolas $te_1 + \dS^1_\pm(r)$, $r \in \R_+$. Thus the orbits of $H$ are the line $\R e_1$, the four half-planes $\R e_1 \times \CM^1_{\pm\pm}$ and the hyperbolic cylinders $\R e_1 \times H^1_\pm(r)$ and $\R e_1 \times \dS^1_\pm(r)$, $r \in \R_+$:
\[
\FM_{A \times \R e_1} = \R e_1 \cup (\R e_1 \times \CM^1_{\pm\pm}) \cup \bigcup_{r \in \R_+} (\R e_1 \times H^1_\pm(r)) \cup \bigcup_{r \in \R_+} (\R e_1 \times \dS^1_\pm(r)).
\]
 The orbit space is a non-Hausdorff space and isomorphic to the union of five points and four copies of $\R_+$.

\medskip
Type (II)$_t$: \textit{$\FM_{H}$ is invariant under a one-dimensional time-like translation group.}

The set $\g{h} = \g{k} \oplus \R e_3$
is a two-dimensional abelian subalgebra of $\g{g}$ and $H = \Exp(\g{h}) = K \times \R e_3$ is a two-dimensional connected abelian subgroup of $G$. The action of $K$ leaves the foliation $\FM_{\R^2}$ invariant. On each leaf $te_3 + \R^2 \in \FM_{\R^2}$ the orbits consist of the single point $\{te_3\}$ and the circles centered at that point. The orbits of $H$ therefore consist of the time-like subspace $\R e_3$ and the cylinders $S^1(r) \times \R e_3 \subset \R^2 \times \R e_3$, where $S^1(r)$ is the circle of radius $r \in \R_+$ in $\R^2$:
\[
\FM_{K \times \R e_3} = \R e_3 \cup \bigcup_{r \in \R_+} (S^1(r) \times \R e_3) .
\]
The orbit space is isomorphic to the closed interval $[0,\infty)$.

\medskip
Type (II)$_l$: \textit{$\FM_{H}$ is invariant under a one-dimensional light-like translation group.}

For $\lambda \geq 0$ we define $\g{a}_\lambda = \R(Y_{\g{a}} + \lambda e_1)$ and $\g{n}_\lambda = \R(Y_{\g{n}} + \lambda e_3)$,
and denote by $A_\lambda = \Exp(\g{a}_\lambda)$ and  $N_\lambda = \Exp(\g{n}_\lambda)$
the corresponding one-dimensional subgroups of $G$. For $\lambda = 0$ we have $\g{a}_0 = \g{a}$, $A_0 = A$, $\g{n}_0 = \g{n}$ and $N_0 = N$. Explicitly, we have
\[
\Exp(t(Y_{\g{a}} + \lambda e_1)) =
\left( \begin{pmatrix} 1 & 0 & 0 \\ 0 & \cosh(t) & -\sinh(t) \\ 0 & -\sinh(t) & \cosh(t) \end{pmatrix} ,
\begin{pmatrix} \lambda t \\ 0 \\ 0 \end{pmatrix} \right) \in SO^o_{2,1} \ltimes \M^3
\]
and
\[
\Exp(t(Y_{\g{n}} + \lambda e_3)) =
\left( \begin{pmatrix} 1 & t & t \\ -t & 1-\frac{1}{2}t^2 & -\frac{1}{2}t^2  \\ t & \frac{1}{2}t^2 & 1 + \frac{1}{2}t^2 \end{pmatrix} ,
\begin{pmatrix} \frac{1}{2}\lambda t^2 \\ -\frac{1}{6}\lambda t^3 \\ \lambda t + \frac{1}{6}\lambda t^3 \end{pmatrix} \right) \in SO^o_{2,1} \ltimes \M^3.
\]

We have $[\g{a}_\lambda,\ell] = \ell$ and $[\g{n}_\lambda,\ell] = 0$. Therefore $\g{a}_{\lambda,\ell} = \g{a}_\lambda \oplus \ell$ and $\g{n}_{\lambda,\ell} = \g{n}_\lambda \oplus \ell$
are subalgebras of $\g{g}$, $\g{a}_{\lambda,\ell}$ is the semidirect sum of $\g{a}_\lambda$ and  $\ell$ and a solvable subalgebra of $\g{g}$, and $\g{n}_{\lambda,\ell}$ is the direct sum of $\g{n}_\lambda$ and  $\ell$ and an abelian subalgebra of $\g{g}$.
We define $A_{\lambda,\ell} = \Exp(\g{a}_{\lambda,\ell})$ and $N_{\lambda,\ell} = \Exp(\g{n}_{\lambda,\ell})$,
which are the connected subgroups of $G$ with Lie algebra $\g{a}_{\lambda,\ell}$ and $\g{n}_{\lambda,\ell}$, respectively. The solvable Lie group $A_{\lambda,\ell}$ is the semidirect product $A_{\lambda,\ell} = A_\lambda \ltimes \ell$ and explicitly given by
\[
A_\lambda \ltimes \ell = \left\{ g_{t,s}^\lambda = \left(
\begin{pmatrix} 1 & 0 & 0 \\ 0 & \cosh(t) & -\sinh(t) \\ 0 & -\sinh(t) & \cosh(t) \end{pmatrix} ,
\begin{pmatrix} \lambda t \\ s \\ -s \end{pmatrix} \right)  : t,s \in \R \right\} \subset SO^o_{2,1} \ltimes \M^3.
\]
The abelian Lie group $N_{\lambda,\ell}$ is the direct product $N_{\lambda,\ell} = N_\lambda \times \ell$ and explicitly given by
\[
N_\lambda \times \ell = \left\{ h_{t,s}^\lambda = \left(
\begin{pmatrix} 1 & t & t \\ -t & 1-\frac{1}{2}t^2 & -\frac{1}{2}t^2  \\ t & \frac{1}{2}t^2 & 1 + \frac{1}{2}t^2 \end{pmatrix} ,
\begin{pmatrix} \frac{1}{2}\lambda t^2 \\ s-\frac{1}{6}\lambda t^3 \\ \lambda t + \frac{1}{6}\lambda t^3 - s \end{pmatrix} \right)  : t,s \in \R \right\} \subset SO^o_{2,1} \ltimes \M^3.
\]
We will now discuss the orbit structure of these actions in more detail.

We first consider the case $\lambda = 0$.
We have
\[
g_{t,s}^0(xe_1 + y(e_2-e_3) + z(e_2+e_3)) = xe_1 + (s+e^ty)(e_2-e_3) + e^{-t}z(e_2+e_3),
\]
which shows that $A \ltimes \ell$ leaves the foliation $\FM_{\M^2}$ invariant. On each leaf $xe_1 + \M^2$ there are precisely three orbits, namely the line $xe_1 + \ell$ and the two open half-planes $xe_1 + \{u \in \M^2:u_2+u_3 > 0\}$ and $xe_1 + \{u \in \M^2:u_2+u_3 < 0\}$ bounded by that line. Thus the set of orbits is
\[
\FM_{A \ltimes \ell} = \bigcup_{x \in \R} \left((xe_1 + \ell) \cup  (xe_1 + \{u \in \M^2:u_2+u_3 > 0\}) \cup  (xe_1 + \{u \in \M^2:u_2+u_3 < 0\})\right).
\]
Hence the orbit space is isomorphic to the union of three copies of $\R$.

Next, we have
\[
h_{t,s}^0(xe_1 + y(e_2-e_3) + z(e_2+e_3)) = (x+2tz)e_1 + (s+y - tx -t^2z)(e_2-e_3) + z(e_2+e_3),
\]
which shows that $N \times \ell$ leaves the foliation $\FM_{\W^2}$ invariant. On the leaf $\W^2$ the orbits are the lines $xe_1 + \ell$, and on the leaf $z(e_2+e_3) + \W^2$, $z \neq 0$, the action is transitive. Thus the set of orbits is
\[
\FM_{N \times \ell} = \bigcup_{x \in \R} (xe_1 + \ell) \cup (\FM_{\W^2} \setminus \W^2).
\]
Hence the orbit space is isomorphic to the union of three copies of $\R$.

We now assume that $\lambda > 0$.
We have $g_{t,s}^\lambda(0) = \lambda t e_1 + s(e_2-e_3)$
and therefore $(A_\lambda \ltimes \ell) \cdot 0 = \W^2$. More general, we have
\[
g_{t,s}^\lambda(xe_1 + y(e_2-e_3) + z(e_2+e_3)) = (x+\lambda t) e_1 + (s + ye^t)(e_2-e_3) + ze^{-t}(e_2+e_3) .
\]
Given $p = xe_1 + y(e_2-e_3) + z(e_2+e_3) \in \M^3$, we put $t = -x/\lambda$ and $s = -ye^{-x/\lambda}$. Then $g_{t,s}^\lambda(p) = ze^{x/\lambda}(e_2+e_3)$. This shows that $\R(e_2+e_3)$ intersects each orbit of the action of $A_\lambda \ltimes \ell$. Conversely, we have
\[
g_{t,s}^\lambda(z(e_2+e_3)) = \lambda t e_1 + s (e_2-e_3) + ze^{-t}(e_2+e_3) \in \R(e_2+e_3) \Longleftrightarrow (t,s) = (0,0),
\]
which implies that $\R(e_2+e_3)$ intersects each orbit of the action of $A_\lambda \ltimes \ell$ exactly once. Thus $\R(e_2+e_3)$ parametrizes the orbit space of the action of $A_\lambda \ltimes \ell$ on $\M^3$. We also have
\begin{align*}
& a_u(g_{t,s}^\lambda(xe_1 + y(e_2-e_3) + z(e_2+e_3))) \\ & =  (x+\lambda t) e_1 + e^u(s + ye^t)(e_2-e_3) + e^{-u}ze^{-t}(e_2+e_3) \\
& = g_{t,e^us}^\lambda(xe_1 + e^uy(e_2-e_3) + e^{-u}z(e_2+e_3)) \\
& =   g_{t,e^us}^\lambda(a_u(xe_1 + y(e_2-e_3) + z(e_2+e_3)),
\end{align*}
that is,
$a_u \circ g_{t,s}^\lambda = g_{t,e^us}^\lambda \circ a_u$.
It follows that all isometries in the abelian Lie group $A$ map orbits of $A_\lambda \ltimes \ell$ onto orbits of $A_\lambda \ltimes \ell$. Since $a_u(e_2+e_3) = e^{-u}(e_2+e_3)$ we see that the orbits parametrized by $\R_+(e_2+e_3)$ are isometrically congruent to each other under the action of $A$, and the same is true for the orbits parametrized by $\R_-(e_2+e_3)$. The isometry of $\M^3$ given by $e_1 \mapsto e_1$, $e_2 \mapsto -e_2$ and $e_3 \mapsto -e_3$ maps the orbit through $z(e_2+e_3)$ onto the orbit through $-z(e_2+e_3)$. It follows that all orbits different from $\W^2 = (A_\lambda \ltimes \ell) \cdot 0$ are isometrically congruent to each other.

We now investigate orbit-equivalence of the actions of $A_\lambda \ltimes \ell$, $\lambda >0$. Let $\lambda,\mu > 0$, $\lambda \neq \mu$, and assume that the actions of $A_\lambda \ltimes \ell$ and $A_\mu \ltimes \ell$ are orbit-equivalent. Both actions have exactly one degenerate orbit, namely
$(A_\lambda \ltimes \ell) \cdot 0 = \W^2 = (A_\mu \ltimes \ell) \cdot 0$. Thus any isometry
$g$ of $\M^3$ mapping the orbits of $A_\lambda \ltimes \ell$ onto the orbits of $A_\mu \ltimes \ell$ must necessarily satisfy $g(\W^2) = \W^2$. The subgroup of $I(\M^3)$ leaving $\W^2$ invariant is $AN \ltimes \W^2 \cup -I_3(AN \ltimes \W^2)$. We first assume that $g \in AN \ltimes \W^2$. Since $A$ normalizes $N$ and $A$ maps orbits of $A_\lambda \ltimes \ell$  onto orbits of $A_\lambda \ltimes \ell$, we can assume that $g \in N \ltimes \W^2$. Then $g$ maps the orbit $(A_\lambda \ltimes \ell) \cdot (e_2+e_3)$ onto one of the orbits of $A_\mu \ltimes \ell$, that is,
$g((A_\lambda \ltimes \ell) \cdot (e_2+e_3)) = (A_\mu \ltimes \ell) \cdot z(e_2+e_3)$
for some $0 \neq z \in \R$. If we write $g = (n_\beta,w) \in N \ltimes \W^2$, the previous equation can be written as
$n_\beta(g^\lambda_{t,s}(e_2+e_3)) + w = g^\mu_{v,u}(z(e_2+e_3))$,
where $\beta \in \R$, $v = v(t,s)$, $u = u(t,s)$ and $w = w_1e_1 + w_2(e_2-e_3) \in \W^2$. Comparing the coefficients corresponding to $e_1,e_2-e_3,e_2+e_3$, respectively, leads to the three equations
\begin{align*}
\lambda t + 2\beta e^{-t} + w_1  &= \mu v\ ,
&s - \beta \lambda t - \beta^2 e^{-t} + w_2  &= u\ ,
&e^{-t}  &= ze^{-v}.
\end{align*}
From the first equation we see that $v = v(t)$ is a function of $t$ only. The third equation
implies $z = e^{v-t}$. Since $z$ is a constant, the third equation implies $v^\prime = 1$, where differentiation is with respect to $t$. From the first equation we get $v^\prime = \mu^{-1}(\lambda - 2\beta e^{-t})$. Comparing both equations for $v^\prime$ leads to $\beta = 0$ and $\lambda = \mu$, which contradicts the assumption $\lambda \neq \mu$. Thus $N \ltimes \W^2$ maps orbits of $A_\lambda \ltimes \ell$  onto orbits of $A_\lambda \ltimes \ell$. It is easy to see that $-I_3$ maps
orbits of $A_\lambda \ltimes \ell$  onto orbits of $A_\lambda \ltimes \ell$ as well. Altogether we can now conclude that the actions of $A_\lambda \ltimes \ell$ and $A_\mu \ltimes \ell$ for $\lambda,\mu > 0$ are orbit-equivalent if and only if $\lambda = \mu$.

Now we study the orbits of $N_{\lambda,\ell}$. We denote by $P_\lambda = \{h_{t,s}^\lambda(0) : t,s \in \R\}$ the orbit $(N_\lambda \times \ell) \cdot 0$. We have
\[
h_{t,s}^\lambda(0) =  \frac{1}{2}\lambda t^2 e_1 + \left(  s -\frac{1}{6}\lambda t^3 \right) (e_2-e_3) + \lambda t e_3
\]
and
\[
h_{t,s}^\lambda(xe_1) = xn_t(e_1) + h_{t,s}^\lambda(0) = xe_1 - xt(e_2-e_3) + h_{t,s}^\lambda(0) = xe_1 + h_{t,s-xt}^\lambda(0),
\]
and therefore $(N_\lambda \times \ell) \cdot xe_1 = xe_1 + P_\lambda$ for all $x \in \R$. The first of these equations shows that $(N_\lambda \times \ell) \cdot 0$ is the ruled surface $P_\lambda$ in $\M^3$ generated by the parabola $z \mapsto \frac{1}{2\lambda}z^2 e_1 + ze_3$ and ruled by the light-like lines $\ell$.  It also follows that the set of orbits is
\[
\FM_{N_\lambda \times \ell} = \bigcup_{x \in \R} (xe_1 + P_\lambda).
\]
Thus all orbits of $N_\lambda \times \ell$ are isometrically congruent to each other.
The orbit space is isomorphic to $\R$. Clearly, this implies that the action of $N_{0,\ell}$ cannot be orbit-equivalent to the action of $N_{\lambda,\ell}$ with $\lambda > 0$. We can rewrite the above expression for $h_{t,s}^\lambda(0)$ as
\[
h_{t,s}^\lambda(0) =  \frac{1}{2}\lambda t^2 e_1 + \left(  s -\frac{1}{2}\lambda t - \frac{1}{6}\lambda t^3 \right) (e_2-e_3) + \frac{1}{2}\lambda t (e_2+e_3).
\]
Now let $\lambda,\mu > 0$ and put $u = \frac{1}{2}\ln\left(\frac{\lambda}{\mu}\right)$, $t' = e^ut$ and $s' = e^us - \sinh(u)\lambda t$. Then we get
\begin{align*}
a_u(h_{t,s}^\lambda(0)) & =  \frac{1}{2}\lambda t^2 e_1 + \left(  s -\frac{1}{2}\lambda t - \frac{1}{6}\lambda t^3 \right) e^u(e_2-e_3) + \frac{1}{2}\lambda t e^{-u}(e_2+e_3) \\
& =  \frac{1}{2}\mu t'^2 e_1 + \left(  s' -\frac{1}{2}\mu t' - \frac{1}{6}\mu t'^3 \right)(e_2-e_3) + \frac{1}{2}\mu t' (e_2+e_3) =  h_{t's'}^\mu(0),
\end{align*}
which shows that $a_u(P_\lambda) = P_\mu$. It follows that for all $\lambda,\mu > 0$ the actions of
$N_\lambda \times \ell$ and of $N_\mu \times \ell$ are orbit-equivalent.

\medskip
Type (III) \textit{$\FM_{H}$ is not invariant under any translation group.}

The restricted Lorentz group $SO^o_{2,1}$ and its parabolic subgroup $AN$ act with cohomogeneity one on $\M^3$. We discussed these two actions in detail in Sections \ref{sect:isotropy action} and
\ref{sect:parabolic action}. It follows in particular that for each of these two groups the set of orbits is not invariant under any translation group.

\smallskip
We can  now formulate the main classification result.

\begin{theorem}\label{th:L3}
Let $H$ be a connected subgroup of $I^o(\M^3)$ acting on $\M^3$ with cohomogeneity one. Then the action of $H$ is orbit-equivalent to one of the following actions:
\begin{enumerate}[{\rm (1)}]
\item $\R^2$, $\M^2$ or $\W^2$;
\item $K \times \R e_3$, $A \times \R e_1$, $N \times \ell$, $N_1 \times \ell$ or $A_\lambda \ltimes \ell$ ($\lambda \geq 0$);
\item $SO^o_{2,1} = KAN$ or $AN$;
\end{enumerate}
\end{theorem}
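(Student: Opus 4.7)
The proof proceeds by case analysis on the Lie algebra $\g{h}$ of $H$. Let $\pi \colon \g{g} \to \g{so}_{2,1}$ denote the projection along $\M^3$, and set $\g{h}_0 = \g{h} \cap \M^3$ and $\bar{\g{h}} = \pi(\g{h})$. The bracket formula $[X+u, Y+v] = [X,Y] + Xv - Yu$ shows that $\bar{\g{h}}$ is a Lie subalgebra of $\g{so}_{2,1}$ and that $\g{h}_0$ is $\bar{\g{h}}$-invariant. Since $\g{so}_{2,1}$ is a real split simple Lie algebra of rank one, its Lie subalgebras fall, up to $SO^o_{2,1}$-conjugation, into the six classes $\{0\}$, $\g{k}$, $\g{a}$, $\g{n}$, $\g{a}\oplus\g{n}$ and $\g{so}_{2,1}$, and the plan is to treat each separately.

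For each choice of $\bar{\g{h}}$ the steps are: (i) enumerate the $\bar{\g{h}}$-invariant subspaces of $\M^3$ as candidates for $\g{h}_0$; (ii) for each such $\g{h}_0$, write a basis of $\g{h}$ as lifts $Y_i + v_i$ of a basis of $\bar{\g{h}}$ and normalize the twists $v_i$ modulo $\operatorname{Im}(Y_i) + \g{h}_0$ via inner translation $\Ad(I,w)(Y+v) = Y + v - Yw$; (iii) impose the bracket relations on $\g{h}$, which become cocycle equations on the $v_i$; (iv) discard the configurations that do not give cohomogeneity one; and (v) use any residual normalizer in $I(\M^3)$ to collapse redundant parameters.

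The cases $\bar{\g{h}} \in \{0, \g{so}_{2,1}, \g{a}\oplus\g{n}\}$ are short. For $\bar{\g{h}} = 0$ the three $SO^o_{2,1}$-orbits on 2-planes in $\M^3$ yield $\R^2, \M^2, \W^2$. For $\bar{\g{h}} = \g{so}_{2,1}$, irreducibility of the standard $\g{so}_{2,1}$-action on $\M^3$ forces $\g{h}_0 \in \{0, \M^3\}$, and Whitehead's lemma $H^1(\g{so}_{2,1}, \M^3) = 0$ makes the twist a coboundary, yielding $SO^o_{2,1}$. For $\bar{\g{h}} = \g{a}\oplus\g{n}$, the only nonzero proper invariant subspaces are $\ell$ and $\W^2$, both of which give generic orbits of dimension 3 (so the action fails to be cohomogeneity one); the case $\g{h}_0 = 0$ reduces, via a direct 1-cocycle computation in which both the cocycle and the coboundary spaces have dimension 3 and the coboundary map $\M^3 \to \text{cocycles}$ is injective, to the untwisted algebra $\g{a}\oplus\g{n}$, giving $AN$. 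For the three one-dimensional projections $\bar{\g{h}} \in \{\g{k}, \g{a}, \g{n}\}$, enumerate the invariant subspaces of the single generator: $Y_{\g{k}}$ (rotation with $\ker = \R e_3$, $\operatorname{Im} = \R^2$) yields $K \times \R e_3$; $Y_{\g{a}}$ (diagonalisable on $\M^3$ with eigenspaces $\R e_1, \ell, \R(e_2+e_3)$ for eigenvalues $0,1,-1$) yields $A \times \R e_1$ and the family $A_\lambda \ltimes \ell$, $\lambda \geq 0$ (the $\R(e_2+e_3)$-case being conjugate to the $\ell$-case via $k_\pi \in K$); and $Y_{\g{n}}$ (regular nilpotent with $\ker = \ell$, $\ker Y_{\g{n}}^2 = \W^2$) yields the family $N_\lambda \times \ell$. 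Any $\g{h}_0$ equal to a $\bar{\g{h}}$-invariant 2-plane produces an action orbit-equivalent to one of the translation subgroups already listed, and the cohomogeneity-one check rules out further spurious twists.

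The main obstacle is extracting the discrete orbit-equivalence invariants from the continuous twist parameter $\lambda$. For $A_\lambda \ltimes \ell$ the reduction from $\lambda \in \R$ to $\lambda \geq 0$ is by the outer isometry $e_1 \mapsto -e_1$ of $\M^3$, and the fact that distinct values $\lambda \geq 0$ give pairwise inequivalent actions is precisely the computation performed in the type (II)$_l$ discussion preceding the theorem: the unique degenerate orbit $\W^2$ must be preserved, and analysing the stabilizer $(AN \ltimes \W^2) \cup -I_3(AN \ltimes \W^2)$ of $\W^2$ in $I(\M^3)$ forces $\lambda = \mu$. For $N_\lambda \times \ell$ the $A$-action collapses all $\lambda > 0$ to the single representative $N_1 \times \ell$, which is distinguished from $N_0 \times \ell = N \times \ell$ by the orbit-space topology (one copy of $\R$ versus three copies of $\R$). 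Combining these observations with the case analysis produces the list of the theorem.
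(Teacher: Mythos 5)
Your proposal is correct and it reaches the paper's list, but it organizes the case analysis differently. The paper's primary split is on $\dim(\g{h}\cap\M^3)\in\{2,1,0\}$, after which $\pi_1(\g{h})$ is pinned down by normalizer computations (the normalizers of $\R e_1$, $\R e_3$ and $\ell$ in $\g{so}_{2,1}$ are $\g{a}$, $\g{k}$ and $\g{a}\oplus\g{n}$); you instead fix the conjugacy class of $\pi(\g{h})$ among the six subalgebra classes of $\g{so}_{2,1}\cong\g{sl}_2(\R)$ and then enumerate invariant subspaces of the standard generators, with your $k_\pi$-conjugation of the $\R(e_2+e_3)$ case replacing the paper's initial reduction of every light-like line to $\ell$, and the conjugacy of $aY_{\g{a}}+bY_{\g{n}}$ ($a\neq0$) into $\g{a}$ replacing the paper's conjugation by $n_{-b/a}$. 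Your cohomological twist removal (Whitehead's lemma for $\g{so}_{2,1}$, the $3=3$ cocycle/coboundary count for $\g{a}\oplus\g{n}$) is a clean substitute for the paper's explicitly exhibited conjugating translations; the paper's matrix computations are precisely the verification that the coboundaries exhaust the cocycles. Both arguments legitimately delegate the identification $N_\lambda\times\ell\sim N_1\times\ell$ for $\lambda>0$ and the rigidity of the parameter in $A_\lambda\ltimes\ell$ to the type (II)$_l$ discussion preceding the theorem. Three points deserve explicit justification in a full write-up, though none is a genuine gap: the assertion that an invariant $2$-plane $\g{h}_0$ always yields an action orbit-equivalent to a translation group is true only under the cohomogeneity-one hypothesis, since configurations such as $A\ltimes\W^2$ (open orbits) or the screw-motion group $\Exp(\R(Y_{\g{k}}+\lambda e_3))\ltimes\R^2$ with $\lambda\neq0$ (transitive) must be excluded rather than identified with translations, which is what your step (iv) must do and what the paper's dimension argument achieves; the claim that $\pi(\g{h})=\g{a}\oplus\g{n}$ with $\g{h}_0\in\{\ell,\W^2\}$ forces three-dimensional orbits needs the paper's observation that, after removing the twists, $H^2_+(1)=AN\cdot e_3$ contains no line; and the family $N_\lambda\times\ell$ also needs a reflection (for instance $e_1\mapsto-e_1$, which sends $Y_{\g{n}}$ to $-Y_{\g{n}}$ and fixes $e_3$ and $\ell$) to reach $\lambda\geq0$ before the $A$-conjugation collapses all $\lambda>0$ to $\lambda=1$, exactly as you did for $A_\lambda\ltimes\ell$.
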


\begin{proof}
We first consider the case $\dim(\g{h} \cap \M^3) = 2$. Every two-dimensional Riemannian, Lorentz\-ian and degenerate subspace of $\M^3$ is conjugate under $O_{2,1}$ to $\R^2$, $\M^2$ and $\W^2$, respectively. For dimension reasons it follows that the action of $H$ is orbit-equivalent to the action of one of the three translation subgroups $\R^2$, $\M^2$ or $\W^2$.

Next, we consider the case $\dim(\g{h} \cap \M^3) = 0$. The projection $\pi_1\colon\g{so}_{2,1} \oplus_\phi\M^3\to\g{so}_{2,1}$ is a Lie algebra homomorphism and therefore $\pi_1(\g{h})$ is a subalgebra of $\g{so}_{2,1}$.  Since $\dim(\g{h} \cap \M^3) = 0$, we must have $\dim(\pi_1(\g{h})) \geq 2$. Every subalgebra of $\g{so}_{2,1}$ of dimension $\geq 2$ is conjugate to $\g{so}_{2,1}$ (for dimension $3$) or $\g{a} \oplus \g{n}$ (for dimension $2$).

If $\pi_1(\g{h}) = \g{so}_{2,1}$, there exist $u,v,w \in \M^3$ such that
$\g{h} = \R(Y_{\g{k}} + u)  + \R(Y_{\g{a}} + v) + \R(Y_{\g{n}} + w)$.
Since $\g{h}$ is a subalgebra, we get
\begin{align*}
\lbrack Y_{\g{k}} + u,Y_{\g{a}} + v \rbrack & = Y_{\g{k}} + Y_{\g{n}} + (Y_{\g{k}}v - Y_{\g{a}}u) \in \g{h},\\
\lbrack Y_{\g{k}} + u,Y_{\g{n}} + w \rbrack & = -Y_{\g{a}} + (Y_{\g{k}}w - Y_{\g{n}}u) \in \g{h},\\
\lbrack Y_{\g{a}} + v,Y_{\g{n}} + w \rbrack & = Y_{\g{n}} + (Y_{\g{a}}w - Y_{\g{n}}v) \in \g{h}.
\end{align*}
This implies $Y_{\g{k}}v - Y_{\g{a}}u = u+w$, $Y_{\g{k}}w - Y_{\g{n}}u = -v$, $Y_{\g{a}}w - Y_{\g{n}}v = w$,
or equivalently,
\begin{align*}
\begin{pmatrix} -v_2 \\ u_3+v_1 \\  u_2 \end{pmatrix}
&= \begin{pmatrix}  u_1+w_1\\ u_2+w_2 \\ u_3+w_3 \end{pmatrix},
&\begin{pmatrix} u_2 + u_3 + w_2 \\ -u_1 - w_1 \\ u_1 \end{pmatrix}
&= \begin{pmatrix} v_1 \\ v_2 \\ v_3 \end{pmatrix},
&\begin{pmatrix} -v_2 -  v_3 \\ v_1 - w_3 \\- v_1 - w_2 \end{pmatrix}
&= \begin{pmatrix} w_1 \\ w_2 \\ w_3 \end{pmatrix}.
\end{align*}
These equations lead to
$u = (u_1 , u_2 , 0)^t$,
$v = ( 0 , v_2 , u_1 )^t$,
$w = (-u_1 - v_2 , -u_2 , u_2 )^t$.
Since
\begin{align*}
\Ad((I_3,(u_2,-u_1,-v_2)^t))(Y_{\g{k}} + u) & = Y_{\g{k}},\\
\Ad((I_3,(u_2,-u_1,-v_2)^t))(Y_{\g{a}} + v) & = Y_{\g{a}},\\
\Ad((I_3,(u_2,-u_1,-v_2)^t))(Y_{\g{n}} + w) & = Y_{\g{n}},
\end{align*}
we get $\Ad((I_3,(-u_2,u_1,v_2)^t))(\g{g}) = \g{so}_{2,1}$.
This shows that the action of $H$ is orbit-equivalent to the action of $SO^o_{2,1}$.

If $\pi_1(\g{h}) = \g{a} \oplus \g{n}$, there exist $v,w \in \M^3$ such that
$\g{h} = \R(Y_{\g{a}} + v) + \R(Y_{\g{n}} + w)$.
Since $\g{h}$ is a subalgebra, we get
$\lbrack Y_{\g{a}} + v,Y_{\g{n}} + w \rbrack  =  Y_{\g{n}} + (Y_{\g{a}}w - Y_{\g{n}}v) \in \g{h}$.
This implies $Y_{\g{a}}w - Y_{\g{n}}v = w$, or equivalently,
$-v_2 - v_3 = w_1$, $v_1 - w_3 = w_2$, $-v_1  - w_2 = w_3$.
These equations lead to
$v = ( 0 , v_2 , v_3 )^t$ and $w = (-v_2 - v_3 , w_2 , - w_2 )^t$.
Since
$\Ad((I_3,(-w_2,-v_3,-v_2)^t))(Y_{\g{a}} + v) = Y_{\g{a}}$ and
$\Ad((I_3,(-w_2,-v_3,-v_2)^t))(Y_{\g{n}} + w)  = Y_{\g{n}}$,
we get
$\Ad((I_3,(-w_2,-v_3,-v_2)^t))(\g{h}) = \g{a} \oplus \g{n}$.
This shows that the action of $H$ is orbit-equivalent to the action of $AN$.

We now consider the case $\dim(\g{h} \cap \M^3) = 1$. If $\dim(\pi_1(\g{h})) \in \{2,3\}$, then we have $\pi_1(\g{h}) = \g{so}_{2,1}$ or $\pi_1(\g{h}) = \g{a} \oplus \g{n}$. Using the same arguments as for the case $\dim(\g{h} \cap \M^3) = 0$ we see that $\g{h}$ is conjugate to $\g{so}_{2,1} \oplus \R u$ or $(\g{a} \oplus \g{n}) \oplus\R u$ with some $0 \neq u \in \M^3$ respectively. The hyperbolic plane $H^2_+(1) = SO^o_{2,1} \cdot e_3= AN \cdot e_3$ does not contain a line and therefore the orbit $(SO^o_{2,1} \ltimes \R u) \cdot e_3 =  (AN \ltimes \R u) \cdot e_3$ is three-dimensional. Thus we must have $\dim(\pi_1(\g{h})) \leq 1$. If $\dim(\pi_1(\g{h})) = 0$, then $\g{h} \subset \M^3$ and therefore $H$ is a one-dimensional translation group. Such a group acts with cohomogeneity two, which gives a contradiction. We conclude that  $\dim(\pi_1(\g{h})) = 1$.

Every one-dimensional space-like, time-like and light-like subspace of $\M^3$ is conjugate under $O_{2,1}$ to $\R e_1$, $\R e_3$ and $\ell$, respectively. We can therefore assume that $\g{h} \cap \M^3$ is equal to one of these three one-dimensional subspaces.

Assume that $\g{h} \cap \M^3 = \R e_1$. The normalizer of $\R e_1$ in $\g{so}_{2,1} = \g{k} \oplus \g{a} \oplus \g{n}$ is equal to~$\g{a}$, which implies $\pi_1(\g{h}) = \g{a}$. Thus we have
$\g{h} = \R(Y_{\g{a}}+v) \oplus \R e_1$ with $v  \in \M^3$.
Since $\Ad((I_3,(0,-v_3,-v_2)^t))(\g{h}) = \g{a} \oplus \R e_1$, we conclude that the action of $H$ is orbit-equivalent to the action of $A \times \R e_1$.

Assume that $\g{h} \cap \M^3 = \R e_3$. The normalizer of $\R e_3$ in $\g{so}_{2,1} = \g{k} \oplus \g{a} \oplus \g{n}$ is equal to~$\g{k}$, which implies $\pi_1(\g{h}) = \g{k}$. Thus we have
$\g{h} = \R(Y_{\g{k}}+v) \oplus \R e_3$ with $v \in \M^3$.
Since $\Ad((I_3,(v_2,-v_1,0)^t))(\g{h}) = \g{k} \oplus \R e_3$, we conclude that the action of $H$ is orbit-equivalent to the action of $K \times \R e_3$.

Assume that $\g{h} \cap \M^3 = \ell$. The normalizer of $\ell$ in $\g{so}_{2,1} = \g{k} \oplus \g{a} \oplus \g{n}$ is equal to $\g{a} \oplus \g{n}$, which implies $\pi_1(\g{h}) \subset \g{a} \oplus \g{n}$. Thus we have $\g{h} = \R(aY_{\g{a}} + bY_{\g{n}}+u) \oplus \ell$ with
$a,b \in \R$ and  $u  \in \M^3$,
where $a,b$ are not both equal to $0$.

We first consider the case $a \neq 0$. Then $\Ad((n_{-b/a},0))(\g{h})$ is  of the form
$\R(Y_{\g{a}} +v) \oplus \ell$ with $v  \in \M^3$.
Since $\Ad((I_3,(0,-v_3,-v_2)^t))(Y_{\g{a}}+v) =  Y_{\g{a}} + v_1e_1$ and $\Ad((I_3,u))(e_2-e_3) =  e_2-e_3$,
we see that $\g{h}$ is conjugate to a subalgebra of the form $\R(Y_{\g{a}}+\lambda e_1) \oplus \ell$ with $\lambda \in \R$.
It follows that the action of $H$ is orbit-equivalent to the action of $A_\lambda \ltimes \ell$ for $\lambda \geq 0$. (For $\lambda < 0$ use the transformation $e_1 \mapsto -e_1$, $e_2 \mapsto e_2$  and $e_3 \mapsto e_3$.)

Next, we consider the case $a = 0$. Then we can assume that $b = 1$ and hence that $\g{h}$ is of the form $\g{h} = \R(Y_{\g{n}} +v) \oplus \ell$ with $v  \in \M^3$.
For $u = (-v_2,v_1,0)^t$ we get
$\Ad((I_3,u))(Y_{\g{n}}+v) =  Y_{\g{n}} + (v_2+v_3)e_3$ and $\Ad((I_3,u))(e_2-e_3) =  e_2-e_3$.
It follows that $\g{h}$ is conjugate to a subalgebra of the form
$\R(Y_{\g{n}}+\lambda e_3) \oplus \ell $ with $ \lambda \in \R$
and therefore the action of $H$ is orbit-equivalent to the action of $N_\lambda \times \ell$ for $\lambda \geq 0$. (For $\lambda < 0$ use the transformation $e_1 \mapsto e_1$, $e_2 \mapsto e_2$  and $e_3 \mapsto -e_3$.)
\end{proof}

\bibliographystyle{amsplain}

\end{document}